\newcommand{\mc}{\mathcal}
\newcommand{\leqs}{\leqslant }
\newcommand{\geqs}{\geqslant }
\newtheorem{lemma}{Lemma}
\newtheorem{proposition}{Proposition}
\newtheorem{theorem}{Theorem}
\newtheorem{cor}{Corollary}
\theoremstyle{definition}
\begin{document}

\author{Winston Heap}
\address{Max Planck Institute for Mathematics, Vivatsgasse 7, 53111 Bonn.}
\email{winstonheap@gmail.com}

\author{K. Soundararajan}
\address{Department of Mathematics, Stanford University, Stanford, CA 94305, USA.}
\email{ksound@stanford.edu}

\title{Lower bounds for moments of zeta and $L$-functions revisited}
\maketitle

\begin{abstract} 
This paper describes a method to compute lower bounds for moments of $\zeta$ and $L$-functions.  
The method is illustrated in the case of moments of $|\zeta(\tfrac 12+it)|$, where the results are new 
for small moments $0 <k <1$.   
\end{abstract}

\section{Introduction}

\noindent This paper reexamines the problem of obtaining lower bounds of the correct order of magnitude for moments of the Riemann zeta function on the critical line, and related problems for central values in families of $L$-functions.  Our work is motivated by recent work on the complementary problem of obtaining upper bounds for such moments.  For example, \cite{RS} enunciates the principle that an upper bound for a particular moment (with a little flexibility) may be used to establish upper bounds of the correct order of magnitude for all smaller moments.  Recent work of the authors with Radziwi{\l \l} \cite{HRS} provides such upper bounds for all moments of the Riemann zeta-function below the fourth moment.   In those papers, one key idea is to approximate Euler products that mimic suitable powers of the zeta-function using Dirichlet series of small length.  The aim of this paper is to demonstrate how that idea may also be used to establish lower bounds of the right order of magnitude for all moments of the Riemann zeta-function.  

\begin{theorem}  \label{thm1}  Let $T$ be large.    Uniformly for $(\log T)^{-\frac 12} \le k \le (\log T)^{\frac 12 -\delta}$ (for any fixed $\delta  >0$) we have  
$$ 
\int_T^{2T} |\zeta(\tfrac 12+ it)|^{2k} dt \ge C_k T (\log T)^{k^2}, 
$$ 
where we may take $C_k = C_1 k$ in the range $k\le 1$, and $C_k =  (C_2 k^2 \log (ek))^{-k^2}$ for some absolute positive constants $C_1$ and $C_2$.  
\end{theorem}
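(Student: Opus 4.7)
\emph{Proof plan.}
Following the Rudnick--Soundararajan methodology as extended by the authors with Radziwi{\l}{\l}, the idea is to construct a short Dirichlet polynomial $\mathcal{N}(s)$ of length $\leqs T^{\vartheta}$ (for some fixed $\vartheta<1$) that imitates $\zeta(s)^{k}$, and then to extract the lower bound on $\int_T^{2T}|\zeta|^{2k}\,dt$ via a H\"older-type inequality relating $\zeta$ to a suitable Dirichlet polynomial built from $\mathcal{N}$.

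\emph{Construction of the proxy.}
Partition the primes in $[2,T^{\theta}]$ (with $\theta<1$) into dyadic blocks $I_j$, set $P_j(s)=\sum_{p\in I_j}p^{-s}$, and for appropriate truncation depths $\ell_j$ define
$$
\mathcal{N}_j(s) = \sum_{\ell=0}^{\ell_j}\frac{(k P_j(s))^\ell}{\ell!},\qquad \mathcal{N}(s) = \prod_{j=1}^{J}\mathcal{N}_j(s).
$$
The $\ell_j$ are chosen so that each $\mathcal{N}_j$ approximates $\exp(k P_j(s))$ faithfully while the product $\mathcal{N}$ stays short. A diagonal computation yields $\int_T^{2T}|\mathcal{N}|^2\,dt \asymp T(\log T)^{k^2}$, the exponent $k^2$ arising from $k^2\sum_{p\le T^\theta}p^{-1}\approx k^2\log\log T$ accumulated across the near-independent blocks.

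\emph{H\"older extraction for $k\ge 1$.}
For $k\geqs 1$, apply H\"older to the pairing of $\zeta$ with a Dirichlet polynomial $\mathcal{D}$ built from $\mathcal{N}$ (morally $\mathcal{D}=\mathcal{N}^{k-1}\overline{\mathcal{N}}^{k}$, replaced by an approximate form for non-integer $k$), chosen so that the diagonal of $\zeta\,\mathcal{D}$ aligns with $|\mathcal{N}|^{2k}$ and hence with $T(\log T)^{k^2}$. Schematically,
$$
\int_T^{2T}|\zeta|^{2k}\,dt \ \geqs\ \frac{|M_1|^{2k}}{M_2^{2k-1}},
$$
where $M_1=\int_T^{2T}\zeta\,\overline{\mathcal{D}}\,dt$ is a twisted first moment evaluated by the approximate functional equation and the diagonal $n=m\ell$, and $M_2=\int_T^{2T}|\mathcal{D}|^{2k/(2k-1)}\,dt$. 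The core analytic inputs are $M_1\gg T(\log T)^{k^2}$ and $M_2\ll_k T(\log T)^{k^2}$. Stirling bookkeeping on the $\ell_j!$ then controls the $k$-dependence of the constants and delivers $C_k=(C_2 k^2\log(ek))^{-k^2}$.

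\emph{Small $k$ and the main obstacle.}
For $0<k\leqs 1$ the exponent $2k/(2k-1)$ is unusable (negative for $k<\tfrac12$, exceeding $2$ for $\tfrac12<k<1$), and one substitutes a weighted Cauchy--Schwarz separating $\zeta$ and the proxy by the factor $|\zeta|^{2-2k}$:
$$
|M_1|^2\ \leqs\ \int_T^{2T}|\zeta|^{2k}\,dt\cdot \int_T^{2T}|\mathcal{D}|^2\,|\zeta|^{2-2k}\,dt,
$$
the second factor being handled via a further H\"older together with the unconditional bound $\int_T^{2T}|\zeta|^{2-2k}\,dt\ll T(\log T)^{(1-k)^2}$, valid since $0<2-2k<2$. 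The resulting constant takes the form $C_k\asymp C_1 k$, the factor $k$ reflecting the $k$ sitting inside the exponential defining $\mathcal{N}_j$. The principal technical obstacle throughout is the control of fractional-power polynomial moments $\int|\mathcal{D}|^{q}\,dt$ for non-integer $q$: since $|\mathcal{D}|^{q}$ is not itself a Dirichlet polynomial, one must approximate it block-by-block, replacing each $|\mathcal{N}_j|^{q}$ by a short Dirichlet polynomial in the spirit of HRS, and then exploit the near-independence across prime blocks. This mechanism of approximating Euler-product powers by short Dirichlet series is precisely the innovation the abstract flags as enabling access to $0<k<1$.
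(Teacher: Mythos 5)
Your overall architecture is sound and matches the paper for $k\ge 1$: the truncated-exponential construction of $\mathcal{N}$ across prime blocks, the twisted first moment $M_1=\int\zeta\,\overline{\mathcal D}\,dt$ with $\mathcal D$ morally $\zeta^{k-1}\overline{\zeta}^{\,k}$, and the H\"older $\int|\zeta|^{2k}\ge|M_1|^{2k}/M_2^{2k-1}$ with $M_2=\int|\mathcal D|^{2k/(2k-1)}$ is exactly the paper's Section 6. But the small-$k$ route — the part of the theorem that is actually new — has a genuine gap.

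After your Cauchy--Schwarz
$|M_1|^2\le\int|\zeta|^{2k}\cdot\int|\mathcal D|^2|\zeta|^{2-2k}$, you propose to finish with a further H\"older plus the HRS bound $\int|\zeta|^{2-2k}\ll T(\log T)^{(1-k)^2}$. This cannot close. Writing $\mathcal D=\mathcal N(\cdot,k-1)\overline{\mathcal N(\cdot,k)}$ so that $|\mathcal D|\approx|\zeta|^{2k-1}$, any H\"older of the form $\int|\mathcal D|^2|\zeta|^{2-2k}\le(\int|\mathcal D|^{2p})^{1/p}(\int|\zeta|^{2\kappa})^{1/q}$ with $(1-k)q=\kappa$ leads, after optimizing $p,q$, to an exponent of $\log T$ equal to $(|2k-1|+(1-k))^2$, which exceeds $k^2$ for all $k<\tfrac12$ (it equals $(2-3k)^2$ there). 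The alternative of splitting $|\mathcal D|^2|\zeta|^{2-2k}=(|\mathcal D|^2|\zeta|^2)^{1-k}(|\mathcal D|^2)^k$ also overshoots, giving exponent $4k^2(1-k)+(2k-1)^2k>k^2$ for $k<1$. The point you are missing is that $\zeta$ must be paired with $\mathcal N(\cdot,k-1)$ \emph{alone}, not with all of $\mathcal D$: the object $\zeta\cdot\mathcal N(\cdot,k-1)$ behaves like $\zeta^k$, so its mean square (computable by Balasubramanian--Conrey--Heath-Brown since $\mathcal N(\cdot,k-1)$ is a short Dirichlet polynomial) is $\asymp T(\log T)^{k^2}$, recovering the right exponent. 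The paper achieves this with a single three-factor H\"older with exponents $2$, $2/(1-k)$, $2/k$:
\begin{align*}
|M_1| &\le \Big(\int_T^{2T}|\zeta|^{2k}\,dt\Big)^{1/2}\Big(\int_T^{2T}|\zeta\,\mathcal N(\cdot,k-1)|^2\,dt\Big)^{(1-k)/2}\\
&\qquad\qquad\times\Big(\int_T^{2T}|\mathcal N(\cdot,k)|^{2/k}|\mathcal N(\cdot,k-1)|^2\,dt\Big)^{k/2},
\end{align*}
and then proves the middle factor is $\ll k^{-1}T(\log T)^{k^2}$ (Proposition 2, via BCH) and the third is $\ll T(\log T)^{k^2}$ (Proposition 3, via the block-independence and truncated-exponential estimates you allude to). The third factor is where the ``approximate fractional power of a Dirichlet polynomial'' machinery lives; the $(2-2k)$-th moment of $\zeta$ is not used anywhere. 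Also, your attribution of the constant $C_1 k$ to ``the $k$ sitting inside the exponential defining $\mathcal N_j$'' is off: it actually arises from the prime product $\prod_{p\le T_\ell}(1+(k^2-1)/p)\asymp(\log T_\ell)^{k^2-1}$ combined with $\log T_\ell\asymp k\log T$, i.e.\ from the choice of truncation scale $T_\ell$ rather than from the exponentials.
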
   
 
    There is a long history concerning such lower bounds for $\zeta$ 
and $L$-functions.   To place our result briefly in context, we recall that in the range $k \ge 1$ such a lower bound was established by \cite{RS lower}, although our quantification of $C_k$ is better and the proof arguably simpler.   Theorem 1 is  new in the range $0< k\le 1$.  Previous work of Heath-Brown \cite{HB} had established such a bound for rational $k$ in this range, and for real $k$ such a bound was known to hold conditional on the Riemann Hypothesis (see \cite{HB, Ram1, Ram2}).   In the range 
$c(\log \log T)^{-\frac 12} \le k = o(1)$, Laurinchikas \cite{L} has shown that the $2k$-th moment is $\sim T (\log T)^{k^2}$.  
The constant  $C_k$ in our result tends to zero as $k\to 0$; with more effort, our argument could be made to yield $C_k \gg 1$ for all $k\le 1$, but we have not done so in the interest of  keeping the exposition simple.  

 Combining the upper bound result of Heap, Radziwi\l\l, and Soundararajan \cite{HRS} with the lower bound
 of Theorem 1,  we obtain  the following corollary.  
 
\begin{cor} For $T$ large, uniformly for $(\log T)^{-\frac 12} \le  k \le 2$ we have 
\[
T(\log T)^{k^2} \gg \int_T^{2T}|\zeta(\tfrac{1}{2}+it)|^{2k}dt \gg k T(\log T)^{k^2}.
\]
\end{cor}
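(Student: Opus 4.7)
The plan is to derive the corollary as an immediate combination of Theorem \ref{thm1} with the upper bound established in \cite{HRS}; almost no new work is required.

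For the lower bound, I would split the interval $(\log T)^{-1/2}\leq k\leq 2$ at $k=1$. In the range $(\log T)^{-1/2}\leq k\leq 1$, Theorem \ref{thm1} directly provides $C_k=C_1 k$, which is already in the desired form $\gg k\,T(\log T)^{k^2}$. In the range $1\leq k\leq 2$, Theorem \ref{thm1} yields $C_k=(C_2 k^2\log(ek))^{-k^2}$; since $k$ now lies in the compact interval $[1,2]$, this expression is bounded below by a positive absolute constant, and because $k\leq 2$ one concludes trivially that $C_k\gg k$ here as well. Hence the lower bound $\gg k\,T(\log T)^{k^2}$ is valid uniformly throughout $(\log T)^{-1/2}\leq k\leq 2$.

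For the upper bound I would appeal to the main result of \cite{HRS}, which gives the matching estimate $\int_T^{2T}|\zeta(\tfrac{1}{2}+it)|^{2k}\,dt\ll T(\log T)^{k^2}$ uniformly for $0<k\leq 2$; in particular the implied constant does not degenerate as $k$ shrinks, so the bound remains valid down to the lower endpoint $k=(\log T)^{-1/2}$.

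The only genuine bookkeeping point is recasting $C_k$ from Theorem \ref{thm1} as $\gg k$ on the sub-range $[1,2]$; everything else is a direct quotation of the two results, so I do not foresee any analytical obstacle in writing out the full proof.
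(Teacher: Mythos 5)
Your proposal is correct and matches the paper exactly: the paper derives the corollary in a single sentence by combining the HRS upper bound with Theorem \ref{thm1}, and your observation that $C_k=(C_2k^2\log(ek))^{-k^2}$ is bounded below by a positive absolute constant (hence $\gg k$) on the compact range $1\le k\le 2$ is the correct, if implicit, bookkeeping step.
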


The moments of $\zeta(\tfrac 12+it)$ encode information on the  distribution of large values of $|\zeta(\tfrac 12+it)|$.  
In \cite{S} it was observed that the $2k$-th moment of $|\zeta(\tfrac 12+it)|$ should  be dominated by values of  size $(\log  T)^{k}$, 
which should occur on  a set of measure about $T/(\log T)^{k^2}$.  On RH, it was shown in \cite{S}  that the measure of 
$\{ t \in [T,2T] : |\zeta (\tfrac 12+it)|\ge (\log  T)^k \} $ is $T(\log T)^{-k^2+o(1)}$ for any fixed positive $k$.  From Corollary 1, we may obtain a sharper form of such a result unconditionally in the limited range $0 <k<2$. 

\begin{cor}  Uniformly in the range 
$$
\sqrt{\log \log T} \log \log \log T \le  V \le 2 \log \log T - 2 \sqrt{\log \log T} \log \log \log T
$$
 we have 
$$ 
\text{meas}\{ t\in [T, 2T]: \ |\zeta(\tfrac 12+it)| \ge e^V \}  = T \exp\Big( -\frac{V^2}{\log  \log T} + O\Big( \frac{V \log \log  \log  T}{\sqrt{\log \log T}} \Big)\Big). 
$$
\end{cor}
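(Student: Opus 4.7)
We deduce Corollary 2 from Corollary 1 by the standard moments-to-distribution mechanism. Set $M(V) := \text{meas}\{t\in[T,2T]: |\zeta(\tfrac12+it)|\ge e^V\}$ and let $k_0 = V/\log\log T$ be the saddle point of $\exp(k^2\log\log T - 2kV)$. In the stated range of $V$, both $k_0$ and small perturbations of it lie comfortably inside the admissible moment interval $[(\log T)^{-1/2},2]$ of Corollary 1; indeed, the upper endpoint $V \le 2\log\log T - 2\sqrt{\log\log T}\log\log\log T$ is precisely what is needed so that $k_0 + 2\delta \le 2$ when $\delta := (\log\log\log T)/\sqrt{\log\log T}$.

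The upper bound is immediate: Markov's inequality at $k=k_0$ combined with the upper half of Corollary 1 gives
\[ M(V) \le e^{-2k_0 V}\int_T^{2T}|\zeta(\tfrac12+it)|^{2k_0}\,dt \ll T\exp(-V^2/\log\log T), \]
and the absolute constant is absorbed by the claimed error term, which tends to infinity throughout the range.

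For the lower bound, set $\delta$ as above, $\Delta := 2\delta\log\log T$, $k := k_0+\delta$, and decompose
\[ \int_T^{2T}|\zeta(\tfrac12+it)|^{2k}\,dt = I_1 + I_2 + I_3 \]
according as $|\zeta(\tfrac12+it)|$ lies in $[0,e^V)$, $[e^V,e^{V+\Delta})$, or $[e^{V+\Delta},\infty)$. In $I_1$ bound $|\zeta|^{2k}\le e^{2\delta V}|\zeta|^{2(k-\delta)}$ and apply the upper half of Corollary 1 at moment $2k_0$; in $I_3$ bound $|\zeta|^{2k}\le e^{-2\delta(V+\Delta)}|\zeta|^{2(k+\delta)}$ and apply the upper half at $2(k+\delta)$. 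Using $k\log\log T = V+\delta\log\log T$, a short calculation shows that both $I_1$ and $I_3$ are at most an absolute constant times $k^{-1}e^{-(\log\log\log T)^2}$ times the lower bound $c_1 kT(\log T)^{k^2}$ from Corollary 1. Since $1/k$ is at worst $\sqrt{\log\log T}/\log\log\log T$ while $e^{(\log\log\log T)^2}$ grows faster than any power of $\log\log T$, $I_1$ and $I_3$ are negligible and $I_2 \gg kT(\log T)^{k^2}$. Combining with the trivial $I_2 \le e^{2k(V+\Delta)}M(V)$ and the identity $k^2\log\log T - 2k(V+\Delta) = -V^2/\log\log T - 4V\delta - 3\delta^2\log\log T$, we obtain the stated lower bound once we check that the residual $4V\delta + 3\delta^2\log\log T + \log(1/k)$ is $O(V\log\log\log T/\sqrt{\log\log T})$ throughout the range.

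The one genuinely delicate point is the calibration of $\delta$. The sandwich has very little multiplicative slack: the saddle-shift gain $e^{\delta^2\log\log T}$ must swamp the factor $k$ appearing in Corollary 1's lower bound, which becomes small near the lower end of the $V$-range; this forces $\delta$ to be at least of order $1/\sqrt{\log\log T}$ up to logarithmic factors. On the other hand, $\delta$ feeds directly into the final error as $V\delta$, so cannot be much larger than $(\log\log\log T)/\sqrt{\log\log T}$ without degrading the conclusion. At the lower endpoint $V \approx \sqrt{\log\log T}\log\log\log T$ the contributions $V\delta$, $\delta^2\log\log T$, and the claimed error $V\log\log\log T/\sqrt{\log\log T}$ all land at the same magnitude $(\log\log\log T)^2$; this tightness — and verifying the matching $V$-range hypothesis — is the one place where careful bookkeeping is required.
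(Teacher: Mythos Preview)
Your proof is correct and follows essentially the same argument as the paper: Markov's inequality at the saddle exponent for the upper bound, and for the lower bound the three-way split of the $2(k_0+\delta)$-th moment with the tail regions controlled via the upper bounds at exponents $2k_0$ and $2(k_0+2\delta)$, with the same choice $\delta = \log_3 T/\sqrt{\log\log T}$. The only differences are cosmetic---you keep the factor $k$ from Corollary~1 rather than immediately bounding it below by $1/\sqrt{\log\log T}$, and you include a discussion of why $\delta$ must be calibrated at this scale---but the mechanism and computations match the paper exactly.
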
  

Recall that Selberg's central limit theorem (see \cite{RS2} for a proof related to ideas of this paper) states that for $t$ chosen uniformly from $[T,2T]$, $\log |\zeta(\tfrac 12+it)|$ has an approximately normal distribution with mean $0$ and variance $\sim \frac 12\log \log T$.   Radziwi{\l \l} \cite{R} has established a uniform version of this result showing that for $V\le (\log \log T)^{\frac 35-\epsilon}$   one has 
$$ 
\text{meas } \{ t\in [T, 2T]: \ \log |\zeta(\tfrac 12+it)| \ge V = \Delta \sqrt{\tfrac 12\log \log T}\}  
\sim  \frac{T}{\sqrt{2\pi}} \int_{\Delta}^{\infty} e^{-x^2/2} dx. 
$$ 
Corollary 2 gives a crude version of such a result but in a wider range for $V$.  
 
 \medskip
 
 {\bf Acknowledgments.}  The second author is partially supported by grants from the NSF (including the FRG grant DMS1854398), and through a Simons Investigator grant from the Simons Foundation.   We are grateful to Maksym Radziwi{\l \l} for many valuable discussions on these themes.

\section{Setup and plan of the proof}
\label{setup sec}

\noindent Since Theorem 1 is really new only in the range $0< k\le 1$, we give a detailed proof in this range.   In Section 6, we briefly indicate the modifications to the argument needed to establish Theorem 1 for $k\ge 1$, and also discuss lower bounds for moments of central values of $L$-functions in families.

Throughout, $\log_j$ will denote the $j$-fold iterated logarithm.   Let $T$ be large and assume that $1/\sqrt{\log T} \le k \le 1$.  
Let $\ell$ denote the largest integer such that $\log_{\ell} T \geq 10^4$.  Define a sequence $T_j$ by setting 
$T_1 = e^2$, and for $2\leqs j \leqs \ell$ by 
$$ 
 T_{j} := \exp \Big ( \frac{k\log T}{(\log_{j} T)^2} \Big ).  
$$
Note that $T_2$ is already large.  Further, the sequence $T_j$ is in ascending order, and lastly $k \log T \ll \log T_\ell \le 10^{-8} k \log T$.

For each $2 \leqs j \leqs \ell$, set
$$
\mathcal{P}_{j}(s) := \sum_{T_{j - 1} \leqs p < T_j} \frac{1}{p^s}, \qquad \text{ and } \qquad P_j = {\mathcal P}_j(1) = \sum_{T_{j-1} \leqs p < T_j} \frac 1p.  
$$
Note that
$$
P_j = \log \frac{\log T_{j}}{\log T_{j-1}} +O\Big( \frac{1}{\log T_{j-1}}\Big)  \sim 2 \log \Big(\frac{\log_{j-1} T}{\log_j T}\Big) = 2\log_j T - 2\log_{j+1} T,
$$ 
so that $P_\ell \geqs 10^4$, $P_{\ell-1} \geqs \exp(10^4)$, and so on. 

Let ${\mathcal N}$ denote the set of integers $n = n_2 \cdots n_\ell$ where each $n_j$ is divisible only by primes in the 
interval $T_{j-1}$ to $T_j$ and such that $\Omega(n_j) \le K_j:=500P_j$ for all $2\le j\le \ell$.  If $n\in {\mathcal N}$ then 
\begin{equation} 
\label{1} 
n = n_2 \cdots n_\ell \le T_2^{500P_2} T_3^{500P_3} \cdots T_\ell^{500 P_\ell} \le T^{k/9}.
\end{equation} 
Let $g(n)$ denote the multiplicative function given on prime powers by $g(p^{r}) = 1/r!$.   
Define, for any real number $\alpha$ and $2\le j \le \ell$ 
\begin{equation} 
\label{1.1} 
{\mathcal N}_j(s,\alpha) = \sum_{r=0}^{K_j} \frac{1}{r!} (\alpha {\mathcal P}_j(s,\alpha))^r = \sum_{\substack{ p|n \implies T_{j-1} \le p\le T_j \\ \Omega(n) \le K_j}} \frac{\alpha^{\Omega(n)} g(n)}{n^s},
\end{equation} 
and put  
\begin{equation} \label{2}
\mathcal{N}(s, \alpha) := \sum_{n\in {\mathcal N}} \frac{\alpha^{\Omega(n)} g(n)}{n^s} = \prod_{j=2}^{\ell} {\mathcal N}_j(s,\alpha). 
\end{equation} 
In view of \eqref{1}, ${\mathcal N}(s,\alpha)$ is a short Dirichlet polynomial.  
The idea is that ${\mathcal N}(s,\alpha)$ behaves in many ways like $\zeta(s)^{\alpha}$, but with the advantage that since ${\mathcal N}(s,\alpha)$ is a short Dirichlet polynomial, one can compute mean-values involving it and $\zeta(s)$.  
The proof of our theorem rests on the following three propositions dealing with such mean values involving $\zeta(s)$ and ${\mathcal N}(s,\alpha)$ for suitable values of $\alpha$. 

\begin{proposition}  \label{prop1} Let $T$ be large.  Uniformly in the range $1\ge k \ge 1/\sqrt{\log T}$ we have  
$$ 
\int_T^{2T} \zeta(\tfrac 12+it) {\mathcal N}(\tfrac 12+it, k-1) {\mathcal N}(\tfrac 12-it, k) dt \ge C_1 T (\log T)^{k^2},  
$$ 
for some positive constant $C_1$.
\end{proposition}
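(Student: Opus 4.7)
The plan is to expand both Dirichlet polynomials, compute the resulting twisted first moment of $\zeta$, and bound the diagonal contribution from below by an Euler-product calculation. Writing $\mathcal{N}(s,\alpha) = \sum_{n\in\mathcal{N}} a_\alpha(n) n^{-s}$ with $a_\alpha(n) := \alpha^{\Omega(n)} g(n)$, the integral expands as
\begin{equation*}
\sum_{m, n \in \mathcal{N}} \frac{a_{k-1}(m)\,a_k(n)}{(mn)^{1/2}} \int_T^{2T} \zeta(\tfrac 12+it)\,(n/m)^{it}\,dt.
\end{equation*}
Since both $m, n \leq T^{k/9}$ by \eqref{1}, the twist $n/m$ stays well below $T^{1/2}$. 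A standard twisted first moment for $\zeta$ on the critical line (via the approximate functional equation, whose dual contribution is negligible because its stationary point $t \sim 2\pi rn/m$ falls outside the summation range $r\leq\sqrt{t/2\pi}$) should give
\begin{equation*}
\int_T^{2T} \zeta(\tfrac 12+it)\,(n/m)^{it}\,dt = T\sqrt{m/n}\cdot \mathbf{1}_{m\mid n} + O\bigl(T^{1-\delta}\bigr),
\end{equation*}
with an error that, summed over $(m,n)$, remains negligible next to $T(\log T)^{k^2}$.

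The diagonal contribution yields the main term $M := T\sum_{n\in\mathcal{N}} a_k(n)(a_{k-1}*1)(n)/n$, where I use that any divisor $m$ of $n\in\mathcal{N}$ automatically lies in $\mathcal{N}$. Since $\mathcal{N}$ and the multiplicative summand both factor across the prime intervals $[T_{j-1},T_j)$, one has $M/T = \prod_{j=2}^{\ell} S_j$ with
\begin{equation*}
S_j := \sum_{n_j\in\mathcal{N}_j} \frac{a_k(n_j)(a_{k-1}*1)(n_j)}{n_j}.
\end{equation*}
Ignoring the truncation $\Omega(n_j)\leq K_j$, a short computation gives the local Euler factor at $p\in[T_{j-1},T_j)$:
\begin{equation*}
\sum_{a\geq 0}\frac{k^a}{a!\,p^a}\sum_{b=0}^{a}\frac{(k-1)^b}{b!} = 1 + \frac{k^2}{p} + \frac{k^2(k^2+1)}{4p^2} + O\!\Bigl(\frac{1}{p^3}\Bigr),
\end{equation*}
the crucial coefficient being $k^2 = k\cdot(1+(k-1))$ --- positive for $0<k\leq 1$ --- and the $1/p^2$ correction also nonnegative in this range. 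Hence $\log S_j^{\mathrm{full}} \geq k^2 P_j - O(T_{j-1}^{-2})$, and by Mertens' theorem $\sum_{j=2}^{\ell} P_j = \sum_{T_1\leq p<T_\ell} 1/p = \log\log T + O(1)$, giving $\prod_j S_j^{\mathrm{full}} \geq c\,(\log T)^{k^2}$ for some absolute $c>0$.

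To absorb the truncation I plan to use Rankin's trick with parameter $u=2$. Using the trivial bound $|(a_{k-1}*1)(p^a)| \leq e$ and $K_j = 500P_j$, one gets
\begin{equation*}
|S_j^{\mathrm{full}} - S_j| \leq 2^{-500P_j}\!\! \prod_{p\in[T_{j-1},T_j)}\!\!\bigl(1 + e(e^{2k/p}-1)\bigr) \leq \exp\bigl((4ek - 500\log 2)P_j\bigr),
\end{equation*}
bounded by $\exp(-335\,P_j)$ for $k\leq 1$ and thus exponentially smaller than $S_j^{\mathrm{full}}$. Consequently $\prod_j S_j \geq (1 - o(1))\prod_j S_j^{\mathrm{full}} \gg (\log T)^{k^2}$, completing the proof. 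The main obstacle, I expect, is formulating the twisted first moment with error terms uniform enough to survive summation over the $\sim T^{2k/9}$ pairs $(m,n)$; a more delicate point is handling the sign oscillations of $(k-1)^{\Omega(m)}$ in bounding $S_j^{\mathrm{full}}$ from below, which the explicit positive Euler-factor calculation above resolves.
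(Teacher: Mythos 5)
Your proposal follows essentially the same strategy as the paper: expand the two Dirichlet polynomials, invoke the elementary approximation $\zeta(\tfrac12+it)=\sum_{r\le T}r^{-1/2-it}+O(T^{-1/2})$ (the paper does exactly this rather than the full approximate functional equation, which keeps the error analysis trivial since all $m,n\le T^{k/9}$), keep the diagonal $m\mid n$, factor the resulting sum over the prime blocks $[T_{j-1},T_j)$, remove the truncation $\Omega(n_j)\le K_j$ via Rankin's trick (you use parameter $2$ where the paper uses $e$ --- immaterial), and bound the untruncated local Euler factors from below by $1+k^2/p$. All of that matches.

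There is one small but genuine uniformity gap in your Mertens step. You claim $\sum_{j=2}^\ell P_j = \sum_{T_1\le p<T_\ell}1/p=\log\log T+O(1)$, but Mertens actually gives $\log\log T_\ell+O(1)$, and since $\log T_\ell\asymp k\log T$ this equals $\log\log T+\log k+O(1)$. When $k$ is near the lower endpoint $1/\sqrt{\log T}$, the discrepancy $\log k$ is as large as $-\tfrac12\log\log T$, so your intermediate claim is false uniformly in $k$. What rescues the conclusion (and what the paper makes explicit) is that $\exp\big(k^2(\log\log T+\log k+O(1))\big)=(\log T)^{k^2}\,k^{k^2}\,e^{O(k^2)}$ and $k^{k^2}\gg 1$ for $0<k\le 1$ (the minimum of $k^{k^2}$ on $(0,1]$ is $e^{-1/(2e)}$). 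You should replace the $\log\log T+O(1)$ assertion with $\log\log T_\ell+O(1)$ and add this one-line observation; with that, the argument is complete and coincides with the paper's proof.
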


\begin{proposition}  \label{prop2} Let $T$ be large.  Uniformly in the range $1\ge k \ge 1/\sqrt{\log T}$ we have  
$$ 
\int_T^{2T} |\zeta(\tfrac 12+it) {\mathcal N}(\tfrac 12+it,k-1)|^2 dt \le C_2 k^{-1} T(\log T)^{k^2}, 
$$ 
for some positive constant $C_2$.  
\end{proposition}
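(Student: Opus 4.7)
The plan is to apply a standard mean value theorem for $\int_T^{2T}|\zeta D|^2\,dt$ with $D(s)=\mathcal{N}(s,k-1)$.  By \eqref{1}, $D$ is a Dirichlet polynomial of length at most $T^{k/9}\leqs T^{1/9}$, well below the threshold at which such formulae are valid.  Using an Ingham-style formula (derived from the approximate functional equation for $\zeta^2$), one obtains
\begin{equation*}
\int_T^{2T}\bigl|\zeta(\tfrac12+it)\mathcal{N}(\tfrac12+it,k-1)\bigr|^2\,dt
= T\sum_{m,n\in\mathcal{N}}\frac{d_m d_n}{[m,n]}\Big(\log\frac{T(m,n)^2}{2\pi mn}+2\gamma\Big)+O(T^{1-\delta})
\end{equation*}
for some $\delta>0$, where $d_n=(k-1)^{\Omega(n)}g(n)$.

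The dominant contribution is the $T\log T$ piece, namely $T\log T\cdot\sum_{m,n\in\mathcal{N}}d_m d_n/[m,n]$.  Since $\mathcal{N}$ factors multiplicatively across the shells $[T_{j-1},T_j)$, and (see below) the constraint $\Omega(n_j)\leqs K_j$ can be removed with exponentially small error, the sum equals the Euler product
\begin{equation*}
\prod_{T_1\leqs p<T_\ell} L_p,\qquad L_p:=\sum_{a,b\geqs 0}\frac{(k-1)^{a+b}}{a!\,b!}\,p^{-\max(a,b)},
\end{equation*}
up to negligible terms.  A direct expansion yields the pleasant identity $L_p=1+(k^2-1)/p+O(1/p^2)$, using $2(k-1)+(k-1)^2=k^2-1$.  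Hence by Mertens' theorem together with $\log T_\ell\asymp k\log T$,
\begin{equation*}
\prod_{T_1\leqs p<T_\ell} L_p\ll(\log T_\ell)^{k^2-1}\ll(k\log T)^{k^2-1}.
\end{equation*}

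Combining, the main term is $\ll T\log T\cdot(k\log T)^{k^2-1}=k^{k^2-1}\cdot T(\log T)^{k^2}$.  Since $k^{k^2}\leqs 1$ for $k\in(0,1]$, we have $k^{k^2-1}\leqs k^{-1}$, giving the desired bound $\ll T(\log T)^{k^2}/k$.  The secondary contribution from $\log((m,n)^2/(2\pi mn))$ is of similar or smaller order, producing at most an extra factor $\log T_\ell\ll k\log T$ (versus $\log T$) after an analogous Euler-product computation.

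The main obstacle is the handling of the constraint $\Omega(n_j)\leqs K_j$ when replacing the sum by the full Euler product.  Since $d_n$ can be negative when $k<1$, the truncated sum is not automatically dominated by the unconstrained one, and one must separately bound the complementary tail.  Passing to absolute values gives $|d_n|=(1-k)^{\Omega(n)}g(n)$; a Chernoff-type large deviation estimate---exploiting $K_j=500 P_j$ lying many standard deviations above the mean---then controls this tail exponentially in each shell, and the resulting error is summable over $2\leqs j\leqs\ell$.  Analogous tail controls are required in the other propositions.
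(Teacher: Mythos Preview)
Your proposal is correct and follows essentially the same route as the paper: apply the twisted second moment formula of Balasubramanian--Conrey--Heath-Brown, factor the resulting sum $\sum_{m,n}(k-1)^{\Omega(m)+\Omega(n)}g(m)g(n)/[m,n]$ over the shells $[T_{j-1},T_j)$, remove the truncation $\Omega(n_j)\le K_j$ shell by shell via a Rankin/Chernoff argument, and evaluate the local factor as $1+(k^2-1)/p+O(1/p^2)$.

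The only substantive difference is in how the weight $\log(BT(m,n)^2/mn)$ is handled.  You split it as $\log T$ plus $\log((m,n)^2/mn)+O(1)$ and argue separately that the secondary piece contributes a factor $\ll\log T_\ell$ rather than $\log T$.  The paper instead writes
\[
\log\Big(\frac{BT(m,n)^2}{mn}\Big)=\frac{1}{2\pi i}\int_{|z|=1/\log T}\Big(\frac{BT(m,n)^2}{mn}\Big)^{z}\frac{dz}{z^2},
\]
bounds by the triangle inequality, and thereby treats the full sum at once; the $z$-perturbation then appears inside the Euler product as the error term $O(\log p/(p\log T))$.  Both arguments lead to the same bound; the contour-integral device simply packages your two computations into one and avoids a separate differentiation of the Euler product for the secondary term.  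Note also that your tail control should be stated multiplicatively over the shells (errors of size $1+O(e^{-300P_j})$ per shell, as in \eqref{3.5}), rather than additively summed over $j$.
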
 

\begin{proposition} 
\label{prop3}  Let $T$ be large.   Uniformly in the range $1\ge k \ge 1/\sqrt{\log T}$ we have  
$$ 
\int_T^{2T} |{\mathcal N}(\tfrac 12+it,k)|^{\frac{2}{k}} |{\mathcal N}(\tfrac 12+it,k-1)|^2 dt \le C_3T(\log T)^{k^2},  
$$ 
for some positive constant $C_3$.
\end{proposition}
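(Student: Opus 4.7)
Since $\mc N(s,\alpha)=\prod_{j=2}^\ell\mc N_j(s,\alpha)$ with disjoint prime supports, the integrand factors as $\prod_j|\mc N_j(1/2+it,k)|^{2/k}|\mc N_j(1/2+it,k-1)|^2$. My plan is to replace each factor by the pure exponential $\exp(2k\,\mathrm{Re}\,\mc P_j(1/2+it))$, using the identity $(2/k)\cdot k+2(k-1)=2k$, thereby reducing the integral to an exponential moment of the short Dirichlet polynomial $\mc P(s):=\sum_{j=2}^\ell\mc P_j(s)=\sum_{T_1\le p<T_\ell}p^{-s}$, which is then computed by orthogonality of $\{p^{it}\}$.

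The key pointwise approximation, proved by controlling the tail $\sum_{r>K_j}(\alpha\mc P_j(s))^r/r!$ via $\sum_{r>K}|z|^r/r!\ll (e|z|/K)^K$ valid for $|z|\le K/e^2$, is
\[
\mc N_j(s,\alpha)=\exp(\alpha\mc P_j(s))\bigl(1+O(e^{-K_j/10})\bigr)\quad\text{whenever }|\alpha|\le 1\text{ and }|\mc P_j(s)|\le 5P_j.
\]
Since $K_j\ge K_\ell\ge 5\cdot 10^6$ while $2/k\le 2\sqrt{\log T}$, raising this factor to the $2/k$-th power contributes only a $1+o(1)$ multiplicative error uniformly in $j$ and $k$. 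Setting $G=\{t\in[T,2T]:|\mc P_j(1/2+it)|\le 5P_j\text{ for all }j\}$, on $G$ the integrand is pointwise bounded by $(1+o(1))\exp(2k\,\mathrm{Re}\,\mc P(1/2+it))$. Expanding this exponential as a Dirichlet series of length at most $T^{k/9}\ll T$ (by the same estimate as in \eqref{1}) and using orthogonality of $\{p^{it}\}$,
\[
\int_G\exp(2k\,\mathrm{Re}\,\mc P(1/2+it))\,dt\ll T\prod_{p<T_\ell}I_0(2k/\sqrt{p})\ll T\exp\Bigl(k^2\sum_{p<T_\ell}\frac{1}{p}\Bigr)\ll T(\log T_\ell)^{k^2}\ll T(\log T)^{k^2}.
\]

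The bad set $B=\bigcup_j B_j$ with $B_j=\{|\mc P_j(1/2+it)|>5P_j\}$ is handled via a standard high-moment bound: $\int_T^{2T}|\mc P_j(1/2+it)|^{2r}dt\ll T\,r!\,P_j^r$, valid for $r\le cK_j$ (the range where off-diagonal contributions remain negligible), yields $|B_j|\ll Te^{-cK_j}$. Combining this with H\"older's inequality — pairing $|B_j|^{1/q}$ against a large integer-power moment of the integrand, which by the same orthogonality argument reduces to a polynomial-length Dirichlet moment growing only as a power of $\log T$ — the bad-set contribution is swamped by the super-polynomial decay $e^{-cK_j}$, and summing over $j$ yields $o(T)$.

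The main obstacle is the noninteger exponent $2/k$, which may be as large as $2\sqrt{\log T}$. This forces the approximation $\mc N_j\approx\exp(k\mc P_j)$ to be accurate to $o(1/\sqrt{\log T})$ uniformly in $j$; this is precisely what the cutoff $K_j=500P_j$, chosen far larger than the natural fluctuation scale $\sqrt{P_j}$ of $|\mc P_j(1/2+it)|$, has been calibrated to deliver, so that $e^{-K_j/10}\cdot(2/k)=o(1)$ uniformly.
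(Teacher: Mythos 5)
Your plan is in the same spirit as the paper — on a ``good'' set where each $|\mathcal P_j|$ is controlled, replace the truncated Euler factors by exponentials and use the arithmetic of exponents $\tfrac 2k\cdot k + 2(k-1) = 2k$; off the good set, show the contribution is negligible. However, the way you execute both halves has real gaps, and the paper's actual route (a pointwise inequality, Lemma \ref{lem1}, valid on \emph{all} of $[T,2T]$, combined with the mean-value theorem and a multiplicativity observation \eqref{4.6}) is both different in structure and more careful at exactly the points where your sketch is thin.

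First, on the good set: the claim that $\exp(2k\,\mathrm{Re}\,\mathcal P(\tfrac12+it))$ can be ``expanded as a Dirichlet series of length at most $T^{k/9}$'' is false. That exponential is the square modulus of the \emph{full} Euler product $\exp(k\mathcal P(\tfrac12+it))=\prod_p\exp(k p^{-1/2-it})$, which is an infinite Dirichlet series; the $T^{k/9}$ bound from \eqref{1} applies to the truncation $\mathcal N$, not to the exponential. Orthogonality of $\{p^{it}\}$ and the appearance of $I_0(2k/\sqrt p)$ are heuristically the right answer, but they are not a proof for an untruncated series over $t\in[T,2T]$ — the off-diagonal terms of an infinite-length Dirichlet series are not controlled by the mean-value theorem \eqref{4.3}. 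The fix is to go the other way: on the good set you should observe that the integrand is $(1+o(1))|\mathcal N(\tfrac12+it,k)|^2$ with $\mathcal N$ the genuine short polynomial, and then invoke \eqref{4.3} and \eqref{4.6}. That is precisely what the paper's Lemma \ref{lem1} delivers pointwise whenever $|\mathcal P_j|\le K_j/10$.

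Second, on the bad set: the H\"older step is not verifiable as stated. You need a moment bound on the \emph{entire} integrand $\prod_j|\mathcal N_j(s,k)|^{2/k}|\mathcal N_j(s,k-1)|^2$ restricted to $B_j$, and the factor $|\mathcal N_j(s,k)|^{2/k}$ with $2/k$ as large as $2\sqrt{\log T}$ is the dangerous one: when $|\mathcal P_j|$ is large, it is not true that its integer-power moments grow ``only as a power of $\log T$'', and you also say nothing about how the factors at indices $j'\ne j$ behave on $B_j$ (the sets $B_{j'}$ overlap). The paper handles exactly this by producing, in the regime $|\mathcal P_j|\ge K_j/10$, the explicit nonnegative majorant $\mathcal Q_j(t)$ combining \eqref{4.2} with a H\"older/Stirling bound for $|\mathcal N_j(s,k)|^{2/k}$, and then showing $\int_T^{2T}\mathcal Q_j(t)\,dt\ll Te^{-K_j}$ in Lemma \ref{lem2} by a saddle-point analysis of $\sum_r(2e/(r+1))^{2r}(K_j+r)!P_j^{K_j+r}$. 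This is the content your proposal outsources to a vague ``polynomial-length Dirichlet moment'', and it is the crux of the proof. Having a pointwise majorant valid for all $t$ (good and bad simultaneously) is also what lets the paper apply the factorization identity \eqref{4.6} cleanly over $[T,2T]$, sidestepping the need to dissect $[T,2T]$ into the intersections of the $B_j$.
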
 

Two applications of H{\" o}lder's inequality 
give 
\begin{align*}
\Big| \int_T^{2T} \zeta(\tfrac 12+it) &{\mathcal N}(\tfrac 12+it, k-1) {\mathcal N}(\tfrac 12-it, k) dt \Big| \\
&\le \Big( \int_T^{2T} |\zeta(\tfrac 12+it)|^{2k} dt \Big)^{\frac 12} \times \Big( \int_{T}^{2T} |\zeta(\tfrac 12+it) {\mathcal N}(\tfrac 12+it,k-1)|^2 dt \Big)^{\frac{1-k}{2}}\\
&\hskip 1 in \times \Big( \int_T^{2T}  |{\mathcal N}(\tfrac 12+it,k)|^{\frac{2}{k}} |{\mathcal N}(\tfrac 12+it,k-1)|^2 dt \Big)^{\frac k2}, 
\end{align*}
so that the lower bound of the theorem follows at once from the three propositions.

 \begin{proof} [Deducing Corollary 2 from Corollary 1]  Let $V$ be in the range of the corollary, and put $k=V/\log \log T$ and 
 $\delta= \log_3 T/\sqrt{\log\log T}$ so that $k+2\delta \le 2$.  The upper bound implicit in the corollary follows (in a stronger form) upon noting that 
 $$ 
 \text{meas} \{ t\in [T, 2T]: |\zeta(\tfrac 12+it)| \ge e^V\} \le e^{-2kV} \int_T^{2T} |\zeta(\tfrac 12+it)|^{2k} dt 
 \ll T \exp\Big( -\frac{V^2}{\log \log T}\Big). 
 $$ 
 To prove the lower bound, consider 
 \begin{equation} 
 \label{1.3}
 \int_T^{2T} |\zeta(\tfrac 12+it)|^{2(k+\delta)} dt \gg (k+\delta) T (\log T)^{(k+\delta)^2} \gg \frac{T}{\sqrt{\log \log T}} (\log T)^{(k+\delta)^2}.  
 \end{equation} 
 The contribution to the integral from $t$ with $|\zeta(\tfrac 12+it)| \le e^V$ is 
 $$ 
 \le e^{2\delta V} \int_{T}^{2T} |\zeta(\tfrac 12+it)|^{2k} dt \ll T (\log T)^{k^2 +2k\delta} = o\Big( \frac{T}{\sqrt{\log \log T}} 
 (\log T)^{(k+\delta)^2}\Big). 
 $$ 
 Similarly, the contribution to the integral from $t$ with $|\zeta(\tfrac 12+it)| \ge e^{V}(\log T)^{2\delta}$ is 
 $$ 
 \le (\log T)^{-2\delta (k+2\delta)}  \int_T^{2T} |\zeta(\tfrac 12+it)|^{2(k+2\delta)} dt \ll T(\log T)^{k^2+ 2\delta k} = o\Big( \frac{T}{\sqrt{\log \log T}} 
 (\log T)^{(k+\delta)^2}\Big). 
 $$ 
 Thus the left side of \eqref{1.3} is dominated by values of $|\zeta(\tfrac 12+it)|$ lying between $e^V =(\log T)^{k}$ and 
 $(\log T)^{k+2\delta}$ and it follows that the measure of the set of such $t$ is 
 \begin{align*}
& \gg (\log T)^{-2(k+\delta) (k+2\delta)} \int_{\substack{ t\in [T,2T] \\ (\log T)^{k+2\delta} \ge |\zeta(\tfrac 12+it)| \ge (\log T)^k} }
 |\zeta( \tfrac 12+it)|^{2(k+\delta)} dt \\
 &\gg \frac{T}{\sqrt{\log \log T}} (\log T)^{-k^2-4k\delta -3\delta^2}.
 \end{align*} 
 The corollary follows.
 \end{proof}


\section{Proof of Proposition \ref{prop1}}

\noindent Expanding out, we have 
\begin{align} 
\label{2.1} 
\int_T^{2T} \zeta(\tfrac 12+it) {\mathcal N}(\tfrac 12+it, k-1) &{\mathcal N}(\tfrac 12-it, k) dt \nonumber \\ 
&= \sum_{n, n\in {\mathcal N}} \frac{(k-1)^{\Omega(n)} k^{\Omega(m)}g(n)g(m)}{\sqrt{mn}} \int_T^{2T} 
\zeta(\tfrac 12+it) \Big(\frac mn\Big)^{it} dt. 
\end{align} 

Using the simple approximation 
\[
\zeta(1/2+it)=\sum_{r \leqs T}\frac{1}{r^{1/2+it}}+O(T^{-1/2}),\qquad t\in[T,2T] 
\] 
we find that 
$$ 
\int_T^{2T} 
\zeta(\tfrac 12+it) \Big(\frac mn\Big)^{it} dt = T \frac{\delta(r n=m)}{\sqrt{r}} + O\Big( T^{\frac12 } 
+ \sum_{\substack{ r \le T \\ r n\neq m}} \frac{1}{\sqrt{r} |\log (r n/m)|} \Big).
 $$
 Here $\delta(r n=m)$ equals $1$ if $n|m$ and $r = m/n$, and there is no main term if $n\nmid m$.  If $r n\neq m$, we may   estimate $1/|\log (r n/m)|$ trivially by $\ll m$, and so the remainder term above is 
 $O(mT^{\frac 12})$.  From these remarks, it follows that the right side of \eqref{2.1} equals 
 \begin{equation} 
 \label{2.2} 
 T \sum_{ \substack{ m, n \in {\mathcal N} \\ n|m}}  \frac{(k-1)^{\Omega(n)} k^{\Omega(m)}g(n)g(m)}{ m} 
 + O\Big( \sum_{ m, n\in {\mathcal N}} \frac{1}{\sqrt{mn}} m T^{\frac 12}\Big). 
 \end{equation} 
Since the elements of ${\mathcal N}$ are all bounded by $T^{1/9}$, the error term above is seen to be $O(T^{7/9})$, which is 
negligible.

Now consider the main term in \eqref{2.2}.   Factor $n=n_2 \cdots n_\ell$ and $m=m_2 \cdots m_\ell$ where $m_j$ and $n_j$ are divisible only by the primes in the interval $(T_{j-1}, T_j)$ and $\Omega(m_j)$ and $\Omega(n_j)$ are bounded by $K_j$. 
Then the main term in \eqref{2.2} factors naturally as 
\begin{equation} 
\label{2.3} 
T \prod_{j=2}^{\ell} \Big( \sum_{ \substack{n_j, m_j \\ n_j | m_j \\ \Omega(m_j) \le 500P_j}} \frac{(k-1)^{\Omega(n_j) }k^{\Omega(m_j)} g(n_j) g(m_j)}{m_j} \Big). 
\end{equation} 
If we drop the condition that $\Omega(m_j) \le K_j$, then the sums over $n_j$, $m_j$ above may be replaced with  (thinking of $a$ as the power of $p$ dividing $m_j$ and $b$ the power dividing $n_j$) 
$$ 
\prod_{T_{j-1} \le p \le T_j} \Big( 1+ \sum_{\substack{ a \ge 1 \\ a \ge b \ge 0}} \frac{k^a (k-1)^b}{p^a} g(p^a) g(p^b) \Big)  
\ge \prod_{T_{j-1} \le p \le T_j} \Big( 1+ \frac{k^2}{p}\Big).
$$  
The error incurred in dropping this condition is bounded in magnitude by 
\begin{align*}
\sum_{\substack{ n_j, m_j \\ n_j|m_j \\ \Omega(m_j) > K_j}} \frac{g(n_j)g(m_j)}{m_j} 
&\le e^{-K_j} \sum_{\substack{ n_j, m_j \\ n_j|m_j }} \frac{g(n_j)g(m_j)}{m_j} e^{\Omega(m_j)} \\
&= e^{-500P_j} 
\prod_{T_{j-1} \le p\le T_j} \Big(1 + \sum_{a\ge 1} \frac{e^a}{a! p^a} \sum_{a\ge b\ge 0} \frac{1}{b!} \Big)\\
&\le e^{-500P_j} \prod_{T_{j-1} \le p \le T_j} \Big( 1+ \frac{20}{p} \Big) \le e^{-400P_j}. 
\end{align*}
It follows that the main term \eqref{2.3} is 
$$ 
\ge T \prod_{j=2}^{\ell} \prod_{T_{j-1} \le p \le T_j} \Big( 1+ \frac{k^2}{p} \Big) \Big(1 - e^{-400P_j}\Big) 
\ge CT (\log T_{\ell})^{k^2}, 
$$ 
for an absolute positive constant $C$.  Since $\log T_\ell \gg k \log T$, and $k^{k^2} \gg 1$ for $0<k \le 1$,  this proves Proposition \ref{prop1}.


\section{Proof of Proposition \ref{prop2}} 

\noindent It is a simple matter to compute the mean square of the zeta function multiplied by a short Dirichlet polynomial.  For example, from \cite{BCH}, we obtain 
\begin{align} 
\label{3.1} 
\int_T^{2T} |\zeta(\tfrac 12+it) {\mathcal N}(\tfrac 12+it, k-1)|^2 dt &= T \sum_{m, n \in {\mathcal N}} \frac{(k-1)^{\Omega(m)+\Omega(n)} g(m) g(n)}{[m,n]} \log \Big( \frac{B T (m,n)^2}{mn} \Big) \nonumber \\ 
&\hskip 1 in +o(T), 
\end{align} 
for a constant $B$.  We must now bound the main term above.   While one can work out an asymptotic for this main term, we give a quick proof of an upper bound, which is all that is needed in Proposition \ref{prop2}.   

Write   
$$ 
\log \Big(  \frac{B T (m,n)^2}{mn} \Big) = \frac{1}{2\pi i } \int_{|z|= 1/\log T} \Big( \frac{BT (m,n)^2}{mn}\Big)^z \frac{dz}{z^2}, 
$$
so that the main term in \eqref{3.1} becomes
$$ 
\frac{T}{2\pi i } \int_{|z|= 1/\log T}\sum_{m, n\in {\mathcal N}} \frac{(k-1)^{\Omega(m)+\Omega(n)} g(m)g(n)}{[m,n]}  \Big( \frac{BT(m,n)^2}{mn}\Big)^z  \frac{dz}{z^2}. 
$$ 
By the triangle inequality, we may estimate the above by 
\begin{equation} 
\label{3.2} 
\le 3T  \log T \max_{|z| = 1/\log T} \Big| \sum_{m, n\in {\mathcal N}} \frac{(k-1)^{\Omega(m)+\Omega(n)} g(m)g(n)}{[m,n]}   
\Big( \frac{(m,n)^2}{mn}\Big)^z\Big|.
\end{equation} 

We can now analyze the sum over $m$ and $n$ in \eqref{3.2} by 
 adapting the argument of the previous section.   Thus decompose $m=m_2 \cdots m_\ell$ and $n=n_2 \cdots n_\ell$ where $m_j$ and $n_j$ are composed only of the primes in $(T_{j-1},T_j)$ and $\Omega(m_j)$ and $\Omega(n_j)$ are both $\le K_j$.  By multiplicativity, the  sum in \eqref{3.2} factors as 
\begin{equation} \label{3.4} 
 \prod_{j=2}^{\ell} \Big( \sum_{\substack { m_j , n_j \\ \Omega(m_j), \Omega(n_j) \le K_j} } \frac{(k-1)^{\Omega(m_j)+\Omega(n_j)} g(m_j) g(n_j)}{[m_j,n_j]}  \Big( \frac{(m,n)^2}{mn} \Big)^z\Big).
\end{equation} 
As before, we handle these terms by first dropping the condition on $\Omega(m_j)$ and $\Omega(n_j)$, and then bounding the error in doing so.   If we drop the conditions on $\Omega(m_j)$ and $\Omega(n_j)$ the sums over $m_j$ and $n_j$ become 
\begin{align*} 
\prod_{T_{j-1} \le p \le T_j} \Big( \sum_{a,b=0}^{\infty} \frac{(k-1)^{a+b}}{a! b! p^{\max(a,b)}} p^{-|b-a|z}\Big) 
&= \prod_{T_{j-1} \le p \le T_j}  \Big( 1+ \frac{(k-1)^2 + 2 (k-1)p^{-z}}{p} + O\Big( \frac{1}{p^2}\Big)\Big)\\
&= \prod_{T_{j-1}  \le p \le T_{j} } \Big( 1 +\frac{k^2-1}{p} + O\Big( \frac{\log p}{p \log T} + \frac{1}{p^2}\Big) \Big).
\end{align*}
The error incurred in dropping the conditions on $\Omega(m_j)$ and $\Omega(n_j)$ is bounded in magnitude by 
\begin{align*}
&\le e^{-K_j} \sum_{m_j, n_j} \frac{g(m)g(n)}{[m,n]} e^{\Omega(m_j)+\Omega(n_j)} \Big(\frac{mn}{(m,n)^2}\Big)^{1/\log T}
\le 
e^{-K_j} \prod_{T_{j-1} \le p\le T_j} \Big(1 + 2\sum_{a=1}^{\infty} \sum_{0\le b\le a} \frac{e^{a+b}}{a! b! p^a} p^{a/\log T} \Big) \\
&\le e^{-500P_j} \prod_{T_{j-1} \le p\le T_j} \Big( 1 + 35 \sum_{a=1}^{\infty} \frac{e^a }{a!p^a} \Big) 
\le e^{-500 P_j} \exp\Big(\sum_{T_{j-1} \le p\le T_j}  \frac{35e}{p} \Big) \le e^{-400P_j}. 
\end{align*}
We conclude that the sum over $m_j$, $n_j$ in \eqref{3.4} is  
\begin{equation} 
\label{3.5} 
\prod_{T_{j-1} \le p\le T_j} \Big(1 + \frac{k^2-1}{p} +O\Big( \frac{\log  p}{p\log  T}  + \frac{1}{p^2}\Big) \Big) \Big(1 + O(e^{-300P_j})\Big), 
\end{equation} 
so that the quantity in \eqref{3.2} is 
$$
 \ll T \log T \prod_{p\le T_\ell} \Big(1 + \frac{k^2-1}{p} + O\Big(\frac{\log p}{p\log  T} + \frac 1{p^2}\Big)  \Big) \ll k^{-1} T (\log T)^{k^2}. 
$$
 The proposition follows. 


\section{Proof of Proposition \ref{prop3}} 

\noindent Recall from \eqref{1.1} and \eqref{2} the definitions of ${\mathcal N}_j(s,\alpha)$ and ${\mathcal N}(s,\alpha)$.  
The following simple lemma is the key to establishing Proposition \ref{prop3}. 

\begin{lemma} \label{lem1} For $2\le j\le \ell$ 
$$ 
|{\mathcal N}_j(\tfrac 12+it, k-1) {\mathcal N}_j(\tfrac 12 +it, k)^{\frac{1}{k}}|^2 \le 
|{\mathcal N}_j(\tfrac 12, +it, k)|^2 (1+ O(e^{-K_j}/k)) + O\Big( 2^{2/k} {\mathcal Q}_j(t)\Big), 
$$ 
where the implied constants are absolute, and 
$$ 
{\mathcal Q}_j(t) =\Big( \frac{12 |{\mathcal P}_j(\tfrac 12+it)|}{K_j} \Big)^{2K_j} \sum_{r=0}^{K_j/k} \Big( \frac{2e |{\mathcal P}_j(\tfrac 12+it)|}{r+1} \Big)^{2r}.
$$ 
\end{lemma}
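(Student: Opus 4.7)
The plan is to establish the inequality pointwise in $t$ by exploiting the fact that ${\mathcal N}_j(s,\alpha)$ is the $K_j$-term truncation of the Taylor series of $e^{\alpha P}$, where $P := {\mathcal P}_j(\tfrac12+it)$. Formally $e^{(k-1)P} \cdot (e^{kP})^{1/k} = e^{kP}$, so the first term on the right-hand side would hold as an equality were it not for truncation; the whole task is to quantify the truncation error, which splits naturally by the size of $|P|$ relative to $K_j$.

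\textbf{Small $|P|$ regime.} First I will treat the range $|P| \leq c K_j$ for a suitable small absolute constant $c > 0$. Here Stirling's bound gives the tail estimate $\sum_{r > K_j} |\alpha P|^r / r! \ll (e|\alpha P|/K_j)^{K_j}$, and on dividing by $|e^{\alpha P}| \geq e^{-|\alpha P|}$ one checks that the relative error remains $O(e^{-K_j})$ for both $\alpha = k - 1$ and $\alpha = k$ (using $|\alpha| \leq 1$). Thus ${\mathcal N}_j(s,\alpha) = e^{\alpha P}(1 + O(e^{-K_j}))$, and expanding $(1+z)^{1/k} = 1 + z/k + O(|z|^2/k^2)$ for small $z$ gives ${\mathcal N}_j(s,k)^{1/k} = e^P(1 + O(e^{-K_j}/k))$; the $1/k$ factor here is exactly the source of the error in the lemma. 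Multiplying then yields
\[
{\mathcal N}_j(s,k-1)\, {\mathcal N}_j(s,k)^{1/k} = e^{kP}(1 + O(e^{-K_j}/k)) = {\mathcal N}_j(s,k)(1 + O(e^{-K_j}/k)),
\]
and squaring magnitudes delivers the first term on the right.

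\textbf{Large $|P|$ regime.} When $|P| > cK_j$ I will drop the exponential approximation and estimate directly. The triangle inequality and Stirling give
\[
|{\mathcal N}_j(s,k-1)| \leq \sum_{r=0}^{K_j} \frac{|P|^r}{r!} \leq (K_j+1)\Big(\frac{e|P|}{K_j}\Big)^{K_j} \leq \Big(\frac{12|P|}{K_j}\Big)^{K_j},
\]
whose square is the first factor of ${\mathcal Q}_j(t)$. For $|{\mathcal N}_j(s,k)|^{2/k}$ I will similarly bound $|{\mathcal N}_j(s,k)| \leq \sum_{r=0}^{K_j}(k|P|)^r / r!$, raise to the $2/k$-th power, and via a Stirling-based comparison match the outcome to the second factor $\sum_{r=0}^{K_j/k}(2e|P|/(r+1))^{2r}$ of ${\mathcal Q}_j$, up to the allowed $2^{2/k}$ slack. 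The length $K_j/k$ appearing in that sum arises because raising to the $2/k$-th power inflates the effective polynomial degree, so the dominant index shifts up to $r \sim \min(2|P|, K_j/k)$.

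\textbf{Main obstacle.} The hardest step will be the large-$|P|$ bound on $|{\mathcal N}_j(s,k)|^{2/k}$: since $1/k \geq 1$, the standard convexity inequalities for $(\sum x_r)^{1/k}$ point the wrong way, so the estimate has to be assembled termwise using Stirling's formula, with constants carefully tracked to fit inside the $2^{2/k}$ slack. The two regimes then patch together cleanly: in the small regime the prefactor $(12|P|/K_j)^{2K_j}$ of ${\mathcal Q}_j$ is already exponentially small and contributes negligibly, while in the large regime $|{\mathcal N}_j(s,k)|^2$ is itself absorbed into ${\mathcal Q}_j$ and can be dropped from the first term.
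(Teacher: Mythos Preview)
Your plan matches the paper's proof almost exactly: the same split into the regimes $|P|\le K_j/10$ and $|P|>K_j/10$, the same tail estimate giving ${\mathcal N}_j(s,\alpha)=e^{\alpha P}(1+O(e^{-K_j}))$ in the small regime (with the $1/k$ appearing when one takes the $1/k$-th power), and the same bound $|{\mathcal N}_j(s,k-1)|\le(12|P|/K_j)^{K_j}$ in the large regime.

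The one point worth flagging is the step you single out as the main obstacle. Your remark that ``the standard convexity inequalities for $(\sum x_r)^{1/k}$ point the wrong way'' is misleading: since $2/k\ge 1$, H{\"o}lder's inequality (with weights $2^{-r}$) goes exactly the right way, and this is precisely what the paper uses. Writing $x_r=(k|P|)^r/r!=2^{-r}\cdot (2k|P|)^r/r!$ and applying H{\"o}lder with exponent $2/k$ yields
\[
\Big(\sum_{r=0}^{K_j}\frac{(k|P|)^r}{r!}\Big)^{2/k}
\le \Big(\sum_{r=0}^{K_j}2^{-r}\Big)^{2/k-1}\sum_{r=0}^{K_j}\frac{(2k|P|)^{2r/k}}{r!^{2/k}},
\]
after which Stirling and the substitution $r\mapsto r/k$ give the second factor of ${\mathcal Q}_j(t)$ with the $2^{2/k}$ slack. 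So there is no need to ``assemble the estimate termwise''; the clean route is H{\"o}lder, and once you see that, the step is routine rather than an obstacle.
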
 
\begin{proof}  We begin by observing that if $|z| \le K/10$ then 
$$ 
\Big| \sum_{r=0}^K \frac{z^r}{r!} - e^z \Big| \le \frac{|z|^{K}}{K!} \le \Big(\frac{e}{10}\Big)^{K}, 
$$ 
so that 
\begin{equation} 
\label{4.1} 
\sum_{r=0}^{K} \frac{z^r}{r!} = e^z \Big(1 + O(e^{-K})\Big). 
\end{equation} 
Consider first the case $|{\mathcal P}_j(\tfrac 12+it)| \le K_j/10$, where three applications of \eqref{4.1} show that 
\begin{align*}
|{\mathcal N}_j(\tfrac 12+it, k-1)|^2 | {\mathcal N}_j(\tfrac 12+it,k)|^{\frac{2}{k}} 
&= \exp( 2k \text{Re} {\mathcal P}_j(\tfrac 12+it))\Big( 1+ O(e^{-K_j}/k) \Big)\\
& = |{\mathcal N}_j(\tfrac 12+it,k)|^2 \Big( 1+ O(e^{-K_j}/k) \Big).
\end{align*} 
The lemma follows in this case.  

Suppose now that $|{\mathcal P}_j(\tfrac 12+it)| \ge K_j/10$.  Here note that  
\begin{align} 
\label{4.2} 
|{\mathcal N}_j(\tfrac 12+it, k-1)| &\le \sum_{r=0}^{K_j} \frac{|{\mathcal P}_j(\tfrac 12+it)|^r}{r!} \le 
|{\mathcal P}_j(\tfrac 12+it)|^{K_j} \sum_{r=0}^{K_j} \Big( \frac{10}{K_j}\Big)^{K_j-r} \frac{1}{r!}    \nonumber \\ 
&\le \Big( \frac{12 |{\mathcal P}_j(\tfrac 12+it)|}{K_j}\Big)^{K_j}. 
\end{align} 
Further, applying H{\" o}lder's inequality we find
\begin{align*}
|{\mathcal N}_j(\tfrac 12+it, k)|^{\frac 2k} &\le \Big( \sum_{r=0}^{K_j} \frac{(k |{\mathcal P}_j(\tfrac 12+it)|)^{r}}{r!}\Big)^{\frac 2k} 
\le \Big( \sum_{r=0}^{K_j} \frac{(2k |{\mathcal P}_j(\tfrac 12+it)|)^{\frac{2r}k}}{r!^{2/k}}\Big) \Big( \sum_{r=0}^{K_j} 2^{-r} \Big)^{\frac 2k-1} 
\\ 
&\le 2^{\frac 2k} \sum_{r=0}^{K_j} (2k|{\mathcal P}_j(\tfrac 12+it)|)^{\frac{2r} k} \Big( \frac{e}{r+1} \Big)^{\frac{2r}k}  
\le 2^{\frac 2k} \sum_{r=0}^{K_j}   \Big(\frac{2e|{\mathcal P}_j(\tfrac 12+it)|}{r/k+1}\Big)^{\frac{2r}k}.
\end{align*}
A little calculus allows us to bound the above by 
$$ 
\ll 2^{\frac 2k} \sum_{r=0}^{K_j/k} \Big( \frac{2e |{\mathcal P}_j(\tfrac 12+it)|}{r+1}\Big)^{2r}, 
$$
which when combined with \eqref{4.2} yields the lemma. 
\end{proof}  

We next show that ${\mathcal Q}_j(t)$ (which is always non-negative by definition) is small on average. 

\begin{lemma} 
\label{lem2} With the above notation
$$ 
\int_T^{2T} {\mathcal Q}_j(t) dt \ll T e^{-K_j}. 
$$ 
\end{lemma}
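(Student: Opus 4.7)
The plan is to expand $\mathcal{Q}_j(t)$ as a weighted sum of high moments of $|\mathcal{P}_j(\tfrac 12+it)|$, bound each by the standard mean value theorem for short Dirichlet polynomials, and then collect the estimates with Stirling's formula.

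By the multinomial theorem one has $\mathcal{P}_j(s)^m=m!\sum_{\substack{\Omega(n)=m\\ p\mid n\Rightarrow T_{j-1}\leqs p<T_j}}g(n)n^{-s}$, so $|\mathcal{P}_j(\tfrac 12+it)|^{2m}$ arises from a Dirichlet polynomial of length at most $T_j^m$. Writing $\mathcal{Q}_j(t)=\sum_{r=0}^{K_j/k}(12/K_j)^{2K_j}(2e/(r+1))^{2r}|\mathcal{P}_j(\tfrac 12+it)|^{2(K_j+r)}$, the key length estimate is
\[
(K_j+r)\log T_j\leqs K_j(1+1/k)\,\frac{k\log T}{(\log_j T)^2}\leqs \frac{2000\log T}{\log_j T}\leqs\frac{\log T}{5},
\]
valid uniformly for $0\leqs r\leqs K_j/k$ because $K_j\leqs 1000\log_j T$ and $\log_j T\ge\log_\ell T\ge 10^4$. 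Hence $T_j^{K_j+r}\leqs T^{1/5}$, and the standard mean value theorem for Dirichlet polynomials yields
\[
\int_T^{2T}|\mathcal{P}_j(\tfrac 12+it)|^{2(K_j+r)}\,dt \leqs T(K_j+r)!\,P_j^{K_j+r}(1+o(1)),
\]
where the bound $(m!)^2\sum g(n)^2/n\leqs m!P_j^m$ follows from $g(n)\leqs 1$ together with the identity $\sum_{\Omega(n)=m}g(n)/n=P_j^m/m!$.

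Substituting back, Stirling applied to the $r=0$ term together with $K_j=500P_j$ produces $(144/(500e))^{K_j}\sqrt{2\pi K_j}\ll e^{-2K_j}$. For $r\ge 1$ one writes $\alpha=r/K_j$ and applies Stirling to $(K_j+r)!$, which transforms the logarithm of the $r$-th summand into $K_j\,\phi(\alpha)+O(\log K_j)$, where $\phi(\alpha)=-2.25-3.83\alpha+(1+\alpha)\log(1+\alpha)-2\alpha\log\alpha$. Elementary calculus shows that $\phi$ attains its maximum near $\alpha\approx 0.09$ at a value $\phi(\alpha^*)\approx -2.06$ and satisfies $\phi(\alpha)\to-\infty$ as $\alpha\to\infty$. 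Summing the $\leqs K_j\sqrt{\log T}$ terms then yields
\[
\int_T^{2T}\mathcal{Q}_j(t)\,dt\ll Te^{-2K_j+O(\log K_j+\log\log T)}\ll Te^{-K_j},
\]
as required.

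The main technical point is the uniform length bound $T_j^{K_j+r}\leqs T^{1/5}$, which is essentially forced by the hierarchical choice of $T_j$ and the cutoff $\log_\ell T\ge 10^4$ in the setup; once this is in place, the remainder is a routine Stirling calculation. The constants $12$ in the definition of $\mathcal{Q}_j$ and $500$ in $K_j=500P_j$ are calibrated precisely so that $\phi(0)=\log(144/(500e))<-2$, leaving ample margin against $e^{-K_j}$.
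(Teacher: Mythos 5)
Your proof is correct and follows essentially the same route as the paper: expand $\mathcal{Q}_j(t)$ into the moments $\int_T^{2T}|\mathcal{P}_j(\tfrac12+it)|^{2(K_j+r)}\,dt$, apply the mean-value theorem for Dirichlet polynomials (the length estimate $T_j^{K_j+r}\leqs T^{1/5}$ you derive is the same observation the paper records as $T_j^{K_j(1+1/k)}\leqs T^{1/10}$), bound $(K_j+r)!^2\sum g(n)^2/n\leqs (K_j+r)!P_j^{K_j+r}$ via $g(n)^2\leqs g(n)$, and then run Stirling on the remaining sum over $r$; your explicit function $\phi(\alpha)$, its maximizer $\alpha^*\approx 0.09$, and the value $\phi(\alpha^*)\approx -2.06$ are consistent with the paper's statement that the maximizing $r$ lies near $2\sqrt{P_jK_j}\approx 0.089K_j$.

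One loose end, present in your write-up exactly as in the paper's: bounding the sum by (number of terms) times (max) introduces the factor $K_j/k\leqs K_j\sqrt{\log T}$, hence a $\tfrac12\log\log T$ in the exponent, which for towers of exponentials $T$ (where $\ell\geqs 3$) can exceed $K_\ell$, so the final inequality $e^{-2K_j+O(\log K_j+\log\log T)}\ll e^{-K_j}$ is not literally justified as written. The repair is to note that the ratio of consecutive summands is $\approx 4(K_j+r)P_j/(r+2)^2$, which drops below $1/4$ once $r>2r^*$, so the tail is a geometric series dominated by its first term and the whole sum is $\ll K_j\cdot(\text{max term})\ll e^{-2K_j+O(\log K_j)}\ll e^{-K_j}$ with no $\log\log T$ appearing at all.
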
 
\begin{proof}  We begin by recalling a simple mean-value theorem for Dirichlet polynomials: 
\begin{align*}
\int_T^{2T} \Big| \sum_{n\le N} a(n) n^{-it}\Big|^2 dt 
&= T \sum_{n\le N} |a(n)|^2 + O\Big( \sum_{\substack{m \neq n \le N}} \frac{|a(m)a(n)|}{|\log (m/n)|}\Big),  
\end{align*}
and bounding $|a(m)a(n)|$ by $|a(m)|^2 +|a(n)|^2$, it follows that 
\begin{equation} 
\label{4.3} 
\int_T^{2T} \Big| \sum_{n\le N} a(n) n^{-it}\Big|^2 dt  = (T+ O(N\log N)) \sum_{n\le N} |a(n)|^2. 
\end{equation} 

Now, for $0\le r\le K_j/k$, 
$$ 
{\mathcal P}_j(\tfrac 12+it)^{K_j+r} = \sum_{ \substack{ \Omega(n) = K_j+r \\ p|n \implies T_{j-1} \le p\le T_j}} \frac{(K_j+r)! g(n)}{n^{\frac 12+it}},
$$ 
is a short Dirichlet polynomial (since $T_j^{K_j(1+1/k)} \le T^{1/10}$), and so by \eqref{4.3} 
\begin{align*} 
\int_T^{2T} |{\mathcal P}_j(\tfrac 12+it)|^{2(K_j+r)} dt &= (T+O(T^{1/2})) \sum_{ \substack{ \Omega(n) = K_j+r \\ p|n \implies T_{j-1} \le p\le T_j}}  \frac{(K_j+r)!^2 g(n)^2}{n} \nonumber \\
&\le (K_j+r)! P_j^{K_j+r} (T+ O(T^{1/2})),
\end{align*} 
where the last bound follows upon noting that $g(n)^2 \le g(n)$.  Using this bound in the definition of ${\mathcal Q}_j(t)$, we find 
\begin{equation} 
\label{4.4}
\int_T^{2T} {\mathcal Q}_j(t) dt \ll T \Big(\frac{12}{K_j}\Big)^{2K_j} \sum_{r=0}^{K_j/k} \Big( \frac{2e}{r+1} \Big)^{2r} (K_j+r)!P_j^{K_j+r} . 
\end{equation} 
Stirling's formula and a little calculus shows that the terms above attain a maximum for $r$ around the solution to 
$r^2=4P_j(K_j+r)$, and since $K_j = 500 P_j$, such $r$ satisfies $2 \sqrt{P_j K_j} \le r \le 2.1 \sqrt{P_jK_j}$.  It follows that the right side of \eqref{4.4} is 
$$ 
\ll T \Big(\frac{12}{K_j}\Big)^{2K_j} \Big( \frac{K_j}{k}\Big) \Big( \frac{2P_j K_j}{e}\Big)^{K_j} e^{2.1\sqrt{P_jK_j}} 
\ll T e^{- K_j}. 
$$ 
\end{proof}

We need one more observation for the proof of the proposition.  Suppose we are given $R$ Dirichlet polynomials 
$$ 
A_j(s) = \sum_{n\in {\mathcal S}_j} a_j(n) n^{-s}, 
$$ 
where the sets ${\mathcal S}_j$ satisfy the following two properties: (i) If $j_1 \neq j_2$ then the elements of ${\mathcal S}_{j_1}$ are all coprime to the elements of ${\mathcal S}_{j_2}$, and (ii) $\prod_{j=1}^{R} n_j \le N$ for all $n_j \in {\mathcal S}_j$.   The coprimality condition implies that there is at most one way to write $n= \prod_{j=1}^{R} n_j$ with $n_j \in {\mathcal S}_j$.   Thus  
applications of \eqref{4.3} give 
\begin{align} 
\label{4.6}
\frac{1}{T} \int_T^{2T} \prod_{j=1}^{R} |A_j(it)|^2 dt 
&= (1+O(NT^{-1}\log N)) \sum_{n\le N} \Big| \sum_{\substack{ n= n_1 \cdots n_R \\ n_j\in {\mathcal S}_j} }\prod_{j=1}^{R} a_j(n_j) \Big|^2  \nonumber
\\
&= (1+O(NT^{-1}\log N)) \prod_{j=1}^R \Big( \sum_{n_j \in {\mathcal S}_j} |a_j(n_j)|^2 \Big) \nonumber \\ 
&= (1+ O(NT^{-1} \log N)) \prod_{j=1}^{R} \Big( \frac 1T \int_{T}^{2T} |A_j(it)|^2 dt \Big). 
\end{align}

We are now ready to combine the above observations to prove Proposition 3.  Applying Lemma \ref{lem1} we find 
\begin{align*}
\int_{T}^{2T} |{\mathcal N}(\tfrac 12+it, k-1)|^2 &|{\mathcal N}(\tfrac 12+it,k)|^{\frac 2k} dt 
\\
&\le \int_{T}^{2T} 
\prod_{j=2}^{\ell} \Big( |{\mathcal N}_j(\tfrac 12+it, k)|^2 (1+O(e^{-K_j}/k)) + O(2^{2/k} {\mathcal Q}_j(t)) \Big) dt.
\end{align*}
Appealing now to the observation \eqref{4.6}, the above is 
\begin{equation} 
\label{4.7}
\ll T \prod_{j=2}^{\ell} \Big( \frac 1T \int_T^{2T}  \Big( |{\mathcal N}_j(\tfrac 12+it, k)|^2 (1+O(e^{-K_j}/k)) + O(2^{2/k} {\mathcal Q}_j(t)) \Big) dt.
\end{equation} 
Applying the mean-value theorem for Dirichlet polynomials \eqref{4.3}, we see that 
\begin{align*}
 \int_T^{2T} |{\mathcal N}_j(\tfrac 12+it, k)|^2 dt &= (T+ O(T^{1/2})) 
 \sum_{\substack{ p|n \implies T_{j-1}\le p\le T_j \\ \Omega(n) \le K_j}}\frac{k^{2\Omega(n)} g(n)^2}{n} 
\\
& \le (T+O(T^{1/2})) \prod_{T_{j-1} \le p\le T_j} \Big(1 +\frac{k^2}{p} + O\Big(\frac{1}{p^2}\Big)\Big). 
\end{align*} 
 Combining this with Lemma \ref{lem2}, we conclude that the quantity in \eqref{4.7} is 
 $$ 
 \ll T \prod_{p\le T_{\ell} } \Big(1+ \frac{k^2}{p} + O\Big( \frac{1}{p^2}\Big) \Big), 
 $$ 
 which completes the proof of the proposition. 
 
 \section{Extensions of the result}  

\noindent We first give the modifications needed to obtain Theorem 1 in the range $k\ge 1$.   Once again let $\ell$ be the 
largest integer with $\log_\ell T \ge 10^4$, and now define $T_j$ by $T_1= k^4 e^2$ and for $2\le j\le \ell$ by 
$$ 
T_j = \exp\Big( \frac{\log T}{k^2 (\log_j T)^2}\Big).  
$$ 
Define ${\mathcal P}_j(s)$, $P_j$ exactly as before, and now put $K_j = 500 k^2P_j$ with ${\mathcal N}(s,\alpha)$ defined accordingly.   Analogously to Proposition 1, we may establish that 
$$ 
\int_T^{2T} \zeta(\tfrac 12+it)  {\mathcal N}(\tfrac 12+it, k-1) {\mathcal N}(\tfrac 12-it,k) dt 
\gg  T \prod_{T_1 \le p \le T_\ell} \Big( 1+  \frac{k^2}{p} \Big). 
$$ 
Now H{\" o}lder's inequality gives that the left side above is 
$$ 
\le \Big( \int_T^{2T} |\zeta(\tfrac 12+it)|^{2k} dt \Big)^{\frac{1}{2k}} \Big( \int_T^{2T} |{\mathcal N}(\tfrac 12+it, k-1) {\mathcal N}(\tfrac 12+it, k)|^{\frac{2k}{2k-1}} dt \Big)^{\frac{2k-1}{2k}}. 
$$ 
By modifying the argument of Proposition 3 (indeed the details are even a little simpler) the second term above may be bounded 
by 
$$
\ll \Big( T \prod_{T_1 \le p \le T_\ell} \Big( 1+ \frac{k^2}{p} + O\Big( \frac{k^4}{p^2} \Big) \Big)^{\frac{2k-1}{2k}}  \ll 
\Big(  T  \prod_{T_1\le p\le T_\ell} \Big( 1+\frac{k^2}{p} \Big) \Big)^{\frac{2k-1}{2k}}.
$$ 
  The lower bound claimed in the theorem follows.

Examining our proof, we may extract the following principle.  Given a family of $L$-functions, if one can compute the mean value of $L(\tfrac 12)$ multiplied by suitable short Dirichlet polynomials, as well as the mean value of $|L(\tfrac 12)|^2$ multiplied by suitable short Dirichlet polynomials, then one obtains a lower bound of the right order for the moments $|L(\tfrac 12)|^k$ for all $k>0$.  If $k \ge 1$, then one needs only an understanding of the mean value of $L(\tfrac 12)$ multiplied by short Dirichlet polynomials, and knowledge of the second moment of $L(\tfrac 12)$ is not required.  Thus, for example, one may establish that 
\begin{equation} \label{6.1} 
\sum_{\chi \pmod{q}} |L(\tfrac 12,\chi)|^{2k} \gg_k q (\log q)^{k^2}, 
\end{equation} 
where $q$ is a large prime, and $k>0$.  Or, that for $k>0$ and large $X$ 
\begin{equation} \label{6.2} 
\sum_{|d|\le X}^{\flat} |L(\tfrac 12,\chi_d)|^k \gg_k X (\log X)^{\frac{k(k+1)}{2}}, 
\end{equation} 
where the sum is over fundamental discriminants $d$.  Previously, \eqref{6.1} and \eqref{6.2} were accessible for all $k\ge 1$ by 
\cite{RS lower}, and \eqref{6.1} was known for rational $0 \le k\le 1$ by the work of Chandee and Li \cite{CL}.    A third example is the family  of quadratic twists of a newform $f$, where the second moment of the central $L$-values is not known.  Here one can establish 
\begin{equation} 
\label{6.3} 
\sum_{|d|\le X}^{\flat} L(\tfrac 12, f\times \chi_d)^k \gg_k X(\log X)^{\frac{k(k-1)}{2}},
\end{equation} 
for all $k\ge 1$.  Such a result would be accessible also to the method of \cite{RS lower}, but the problem of obtaining satisfactory lower bounds for the small moments $k<1$ (which is connected to the delicate question of non-vanishing of $L$-values) remains open.


\begin{thebibliography}{9}


\bibitem{BCH}R. Balasubramanian, J. B. Conrey, D. R. Heath-Brown, \emph{Asymptotic mean square of the
product of the Riemann zeta-function and a Dirichlet polynomial}, J. Reine Angew. Math. {\bf 357} (1985), 161--181.




\bibitem{CL} V. Chandee, X. Li, \emph{Lower bounds for small fractional moments of Dirichlet L-functions}, Int. Math. Res. Not. 2013, no. 19, 4349--4381.


\bibitem{HRS}W. Heap, M. Radziwi\l\l, K. Soundararajan, \emph{Sharp upper bounds for fractional moments of the Riemann zeta function}, Quarterly J. Math. {\bf 70} no. 4 (2019), 1387--1396.

\bibitem{HB}D. R. Heath-Brown, \emph{Fractional moments of the Riemann zeta function}, J. London Math. Soc., {\bf 24}, no. 1 (1981), 65--78.


\bibitem{L}  A. P. Laurinchikas, \emph{Moments of the Riemann zeta-function on the critical line}.  Math. Notes of the Acad. of Sciences of the USSR, {\bf 39} (1986), 267--272.

\bibitem{R}  M. Radziwi{\l \l}, \emph{Large deviations in Selberg's central  limit theorem}, preprint {\tt arXiv:1108.509v1} (2011).


\bibitem{RS lower}M. Radziwi\l\l, K. Soundararajan, \emph{Continuous lower bounds for moments of zeta and $L$-functions}, Mathematika, {\bf 59} no. 1 (2013), 119--128.


\bibitem{RS} M. Radziwi\l\l, K. Soundararajan, \emph{Moments and distribution of central $L$-values of quadratic twists of elliptic curves}, Invent. Math. {\bf 202} no. 3 (2015), 1029--1068.


\bibitem{RS2} M. Radziwi{\l \l}, K. Soundararajan, \emph{Selberg's central limit theorem for $\log |\zeta(\tfrac 12+it)|$}.  
Enseign. Math. {\bf 63} (2017) 1--19.



\bibitem{Ram1} K. Ramachandra, \emph{Some remarks on the mean value of the Riemann zeta function and other Dirichlet series}, Ann. Acad. Sci. Fennicae, {\bf 5} (1980), 145--158.


\bibitem{Ram2} K. Ramachandra, \emph{Some remarks on the mean value of the Riemann zeta function and other Dirichlet series. II} Hardy-Ramanujan J., {\bf 3} (1980), 1--24. 

\bibitem{S}  K. Soundararajan, \emph{Moments of the Riemann zeta function}, Ann. Math.  {\bf 170} (2009) 981--993.

\end{thebibliography}
\end{document}